\documentclass[a4paper,12pt]{article}
\usepackage[cp1251]{inputenc}
\usepackage[russian, ukrainian,english]{babel}
\usepackage{amsmath, amsthm, amsfonts, amssymb}
\usepackage{graphicx}

\textwidth=16cm
\textheight=23cm
\hoffset=-0.8cm
\voffset=-1.3cm


\theoremstyle{plain}
\newtheorem{theorem}{Theorem}[section]

\newtheorem{lemma}{Lemma}[section]

\newtheorem{remark}{Remark}[section]
\newtheorem{definition}{Definition}[section]

\begin{document}
	
	\begin{center}
		{\bf Stationary points in coalescing stochastic flows on $\mathbb{R}$}
		
		\vskip20pt
		
		A.~A.~Dorogovtsev, G.~V.~Riabov, B.~Schmalfu\ss
		
			\end{center}
		
		\vskip20pt

		{\small {\bf Abstract.} This work is devoted to long-time properties of the Arratia flow with drift -- a stochastic flow on $\mathbb{R}$ whose one-point motions are weak solutions to a stochastic differential equation $dX(t)=a(X(t))dt+dw(t)$ that move independently before the meeting time and coalesce at the meeting time. We study special modification of such flow (constructed in \cite{Riabov}) that gives rise to a random dynamical system and thus allows to discuss stationary points. Existence of a unique stationary point is proved in the case of a strictly monotone Lipschitz drift by developing a variant of a pullback procedure. Connections between the existence of a stationary point and properties of a dual flow are discussed.}

\section{Introduction}

In the present paper we investigate a long-time behaviour of the Arratia flow \cite{Arratia, Arratia2} and its generalizations -- Arratia flows with drifts. These objects will be introduced in the framework of stochastic flows on $\mathbb{R}$. Following \cite{LeJanRaimond}, by a stochastic flow on $\mathbb{R}$ we understand a family $\{\psi_{s,t}:-\infty<s\leq t<\infty\}$ of measurable random mappings of $\mathbb{R}$ that possess two properties. 

\begin{enumerate}
	
	\item Evolutionary property: for all $r\leq s\leq t,$ $x\in\mathbb{R},$  $\omega\in \Omega$ 
\begin{equation}
\label{eq23_03}
\psi_{s,t}(\omega,\psi_{r,s}(\omega,x))=\psi_{r,t}(\omega,x)
\end{equation}
and $\psi_{s,s}(\omega,x)=x.$

	\item Independent and stationary increments: for $t_1<\ldots<t_n$ mappings $\psi_{t_1,t_2},$ $ \ldots,$ $\psi_{t_{n-1},t_n}$ are independent and $\psi_{t_1,t_2}$ is equal in distribution to $\psi_{0,t_2-t_1}.$
	
\end{enumerate}

For each $n\geq 1$ an $\mathbb{R}^n$-valued  stochastic process $t\to (\psi_{s,t}(x_1),\ldots,\psi_{s,t}(x_n)),$ $t\geq s$ will be called an $n-$point motion of the stochastic flow  $\{\psi_{s,t}:-\infty<s\leq t<\infty\}$. In \cite[Th. 1.1]{LeJanRaimond} it is proved that distributions of all finite-point motions uniquely define the distribution of a stochastic flow, and, actually, one can construct a  stochastic flow by specifying distributions of all its finite-point motions in a consistent way. Using this approach we give a definition of the Arratia flow with drift. Throughout the paper $a:\mathbb{R}\to\mathbb{R}$ is a Lipschitz function. The Borel $\sigma$-field on $\mathbb{R}^n$ will be denoted by $\mathcal{B}(\mathbb{R}^n).$

Consider a SDE
\begin{equation}
\label{6_11_eq1}
dX(t)=a(X(t))dt+dw(t),
\end{equation}
where $w$ is a Wiener process. For every $x\in \mathbb{R}$ the equation \eqref{6_11_eq1} has a unique strong solution $\{X_x(t):t\geq 0\}$ and defines a Feller semigroup of transition probabilities on $\mathbb{R}$  \cite[Ch. V, Th. (24.1)]{RW2}
$$
P^{(1)}_t(x,A)=\mathbb{P}(X_x(t)\in A),
$$
$t\geq 0, x\in\mathbb{R}, A\in\mathcal{B}(\mathbb{R}).$

Further, 
$$
P^{(n),ind.}((x_1,\ldots,x_n),A_1
\times \ldots \times A_n)=\prod^n_{i=1}P^{(1)}_t(x_i,A_i),
$$
where $t\geq 0,$ $(x_1,\ldots,x_n)\in\mathbb{R}^n,$ $A_1,\ldots,A_n\in\mathcal{B}(\mathbb{R}^n),$ defines a Feller transition probability on $\mathbb{R}^n$ that corresponds to an $n$-dimensional SDE
$$
dX_i(t)=a(X_i(t))dt+dw_i(t), \ 1\leq i\leq n,
$$
where $w_1,\ldots,w_n$ are independent Wiener processes. The sequence $\{P^{(n),ind.}:n\geq 1\}$ is consistent \cite{LeJanRaimond}, i.e. given $t\geq 0,$ $x\in \mathbb{R}^n,$ $1\leq i_1< i_2<\ldots <i_k\leq n,$ $B_k\in \mathcal{B}(\mathbb{R}^k)$ and $C_n=\{y\in \mathbb{R}^n: (y_{i_1},\ldots,y_{i_k})\in B_k\},$  one has
$$
P^{(n),ind.}_t(x,C_n)=P^{(k),ind.}_t((x_{i_1},\ldots,x_{i_k}),B_k).
$$
Finite-point motions of the Arratia flow with drift $a$ are specified via the result of \cite{LeJanRaimond} (see also \cite[L. 4.1]{Riabov}).

\begin{lemma}\cite[Th. 4.1]{LeJanRaimond}
	There exists a unique consistent sequence of Feller transition semigroups $\{P^{(n),c}_t:n\geq 1\}$ (the so-called coalescing transition semigroups) such that
	
	\begin{enumerate}
		
		\item for every $n\geq 1$ $\{P^{(n),c}_t:t\geq 0\}$ is a transition semigroup on $\mathbb{R}^n;$
		
		\item for all $x\in \mathbb{R}$ and $t\geq 0$
		$$
		P^{(2),c}_t((x,x),\Delta)=1,
		$$
		where $\Delta=\{(y,y):y\in\mathbb{R}\}$ is a diagonal;

\item Given $x\in\mathbb{R}^n$ let  $X=(X_1,\ldots,X_n)$ be an $\mathbb{R}^n$-valued Feller process with the starting point $x$ and transition probabilities  $\{P^{(n),c}_t:t\geq 0\}$ and $\tilde{X}=(\tilde{X}_1,\ldots,\tilde{X}_n)$ be an  $\mathbb{R}^n$-valued Feller process with the starting point $x$ and transition probabilities  $\{P^{(n),ind.}_t:t\geq 0\}$.  Let 
$$
\tau=\inf\{t\geq 0|\exists i,j: \ 1\leq i<j\leq n, X_i(t)=X_j(t)\}
$$
be the first meeting time for processes $X_1,\ldots,X_n,$ and 
$$
\tilde{\tau}=\inf\{t\geq 0| \exists i,j: \  1\leq i<j\leq n,  \tilde{X}_i(t)=\tilde{X}_j(t)\}
$$
be the first meeting time for processes $\tilde{X}_1,\ldots,\tilde{X}_n.$ Then distributions of stopped processes $\{(X_1(t\wedge \tau),\ldots,X_n(t\wedge \tau)):t\geq 0\}$ and $\{(\tilde{X}_1(t\wedge \tilde{\tau}),\ldots,\tilde{X}_n(t\wedge \tilde{\tau})):t\geq 0\}$ coincide.

\end{enumerate}

\end{lemma}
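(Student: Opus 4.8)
The plan is to construct the semigroups $P^{(n),c}_t$ explicitly by ``sticking together'' the coordinates of the independent $n$-point motion at their successive meeting times, and then to read off (1)--(3), consistency, and the Feller property from the corresponding facts about $\{P^{(n),ind.}_t\}$ together with the strong Markov property. Concretely: given $x\in\mathbb{R}^n$, run $n$ independent solutions $Y_1,\dots,Y_n$ of \eqref{6_11_eq1} from the coordinates of $x$; let $\tau_1$ be the first time two of them coincide; on $[0,\tau_1]$ set $X_i=Y_i$; at $\tau_1$ identify the coordinates that met and, by the strong Markov property, continue with an independent multi-point motion issued from the collapsed configuration; iterate. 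Since $a$ is Lipschitz, \eqref{6_11_eq1} is pathwise unique and generates a Feller diffusion, the coalescence times are strictly increasing stopping times and at most $n-1$ of them occur, so the procedure terminates and yields a well-defined $\mathbb{R}^n$-valued process $X^{(n)}_x$; put $P^{(n),c}_t(x,\cdot)=\mathrm{Law}(X^{(n)}_x(t))$ (extended to unordered $x$ using the evident permutation-equivariance of the construction). Property (2) is immediate, since the two-point motion from $(x_0,x_0)$ starts on the diagonal and its single coalescing coordinate just solves \eqref{6_11_eq1}; the stopped-process statement (3) is also immediate, because on $[0,\tau]$ the construction has not yet modified the independent trajectories.

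The Markov and semigroup properties follow by decomposing the trajectory at the (stopping) coalescence times: conditionally on these times and on which coordinates coalesce, the successive pieces are independent-motion pieces, and applying the strong Markov property of $\{P^{(n),ind.}_t\}$ at each such time yields the Chapman--Kolmogorov identity for $P^{(n),c}_t$. The Feller property is where a genuine, though routine, estimate is required: one must show $x\mapsto \mathbb{E} f(X^{(n)}_x(t))$ is continuous for $f\in C_b(\mathbb{R}^n)$, which reduces to a.s.\ continuity in $x$ of the pair (trajectory, sequence of coalescence times). Continuity of the independent solutions in the initial data is standard; the only delicate point is continuity of the meeting times, which holds because the difference of two independent solutions of \eqref{6_11_eq1} is a diffusion with non-degenerate diffusion coefficient, hence after first reaching $0$ it immediately takes values of both signs; combined with uniform convergence of the solution maps on compacts this gives, off a null set, that a small perturbation of $x$ perturbs each meeting time only slightly. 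Dominated convergence then yields continuity, and $P^{(n),c}_tf\to f$ as $t\downarrow 0$ is clear.

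The main obstacle is consistency: for $i_1<\dots<i_k$ one must identify the law of $(X^{(n)}_{i_1},\dots,X^{(n)}_{i_k})$ with $P^{(k),c}$ issued from $(x_{i_1},\dots,x_{i_k})$. Here the one-dimensional structure is essential. Assume $x_1\le\dots\le x_n$; since continuous trajectories that have not yet met keep their order and coalescence only fuses contiguous blocks, $X^{(n)}$ stays in the sorted cone for all time, so two of the watched coordinates merge exactly when the whole block of indices between them has fused into one of them. Between consecutive such ``$S$-meeting'' times the watched coordinates (with multiplicities) form a marginal of an independent multi-point motion on the current distinct values — this uses the consistency of $\{P^{(n),ind.}_t\}$, so that block-merges not involving two watched coordinates are invisible at the level of law — and the strong Markov restarts let one concatenate these pieces. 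Hence $(X^{(n)}_{i_1},\dots,X^{(n)}_{i_k})$ is, up to its first $S$-meeting, an independent $k$-point motion stopped at that time, and thereafter an object of the same kind on $\le k-1$ coordinates; an induction on $k$, invoking the characterization just established for the constructed family, identifies its law with $P^{(k),c}$.

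Finally, uniqueness follows by induction on $n$ from (1)--(3). For $n=1$, condition (3) with $\tau=\widetilde\tau=\infty$ forces $P^{(1),c}=P^{(1)}$. For $n\ge2$, (3) determines the law of the $n$-point motion up to its first meeting time; at that time two coordinates become equal and, by (2) together with the assumed consistency of the family, they remain equal while the remaining $n-1$ effective coordinates start afresh; the strong Markov property reduces them to the $(n-1)$-point coalescing motion, whose law is fixed by the induction hypothesis. Thus any consistent Feller family satisfying (1)--(3) coincides with the one constructed above.
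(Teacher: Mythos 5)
This lemma is not proved in the paper at all: it is quoted verbatim from Le Jan and Raimond \cite[Th. 4.1]{LeJanRaimond} and used as an imported black box, so there is no internal proof to compare your argument against. Judged on its own terms, your proposal is the standard direct construction of the one-dimensional coalescing system (run independent solutions of \eqref{6_11_eq1}, fuse at successive meeting times, restart by the strong Markov property), and it correctly isolates the two genuinely non-routine points: consistency, which you handle via order preservation and the observation that two watched coordinates can only meet once the whole block of indices between them has fused, and the Feller property, which you reduce to a.s.\ continuity of meeting times using the regularity of $0$ for the difference diffusion. Both ideas are right, and your uniqueness induction (law determined up to the first meeting by (3), diagonal absorbed by (2) plus consistency, then reduce to $n-1$ points) is the correct argument. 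Two places are thinner than they should be. First, for consistency, identifying the watched $k$-tuple between consecutive $S$-meeting times with an independent $k$-point motion requires an explicit concatenation lemma: the pieces are glued at stopping times of the \emph{full} $n$-particle filtration, and one must verify that the strong Markov restarts with fresh drivers really reproduce the transition function $P^{(k),ind.}$ across those times; you assert this but do not formulate the piecing argument. Second, for the Feller property, continuity of the \emph{first} meeting time is not enough: one needs a definite rule for which driving Wiener process survives each fusion and an argument that, for fixed $x$ and almost every $\omega$, the entire combinatorial pattern of coalescences is stable under small perturbations of $x$ (otherwise the trajectory after the first fusion can jump between paths driven by different Wiener processes). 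Both gaps are fixable by standard arguments, so the proposal is a sound outline rather than a complete proof.
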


\begin{definition}
	\label{def23_03} A stochastic flow  $\{\psi_{s,t}:-\infty<s\leq t<\infty\}$ is the Arratia flow with drift $a,$ if for all $s\in\mathbb{R},$ $n\geq 1$ and $x=(x_1,\ldots,x_n)\in\mathbb{R}^n$ the finite-point motion
	$$
	t\to (\psi_{s,s+t}(x_1),\ldots,\psi_{s,s+t}(x_n)), t\geq 0
	$$
	is a Feller process with a starting point $x$ and transition probabilities $\{P^{(n),c}_t:t\geq 0\}.$

\end{definition}

\begin{remark} Because of the coalescence random mappings $x\to\psi_{s,t}(x)$ are not bijections. Hence, the family $\{\psi_{s,t}:-\infty<s\leq t<\infty\}$ cannot be extended to all pairs $(s,t)\in \mathbb{R}^2$ in such a way that the evolutinary property \eqref{eq23_03} holds for all $r,s,t\in\mathbb{R}$. In the terminology of \cite{Kunita} the family  $\{\psi_{s,t}:-\infty<s\leq t<\infty\}$ should be called rather a semiflow. As we won't deal with bijective mappings in this paper, we will keep the term ``stochastic flow'' for the family $\psi$.
	\end{remark}

Informally, the Arratia flow with drift is a system of coalescing (weak) solutions of the stochastic differential equation \eqref{6_11_eq1} that start from every time-space point $(t,x)\in \mathbb{R}^2$ and move independently up to the moment of meeting. The case $a=0$ corresponds to the Arratia flow -- a system of coalescing Brownian motions that are independent before meeting time.
As it was mentioned, existence and uniqueness of the Arratia flow with drift follows from  general results  \cite[Th. 1.1, 4.1]{LeJanRaimond}. 

The main objective of the present work is to study the long-time behaviour of random dynamical systems generated by the Arratia flow with drift. Appropriate modifications of coalescing stochastic flows on $\mathbb{R}$ that are random dynamical systems (in the sense of L. Arnold) was constructed in \cite{Riabov}. For Arratia flows with drift the existence result can be stated as follows.

\begin{theorem}
	\label{thm1}
	\cite[Th. 1.1]{Riabov} There exists a probability space $(\Omega,\mathcal{F},\mathbb{P})$ equipped with a measurable group $(\theta_t)_{t\in\mathbb{R}}$ of measure-preserving transformations of $\Omega$ and a measurable mapping 
	$$
	\varphi:\mathbb{R}_+\times\Omega\times \mathbb{R}\to \mathbb{R}
	$$
	such that 
	\begin{enumerate}
		\item $\varphi$ is a perfect cocycle over $\theta:$
\begin{equation}
	\label{eq28_1}
		\forall s,t\geq 0, \omega\in \Omega, x\in \mathbb{R} \ \ \ \varphi(t+s,\omega,x)=\varphi(t,\theta_s \omega,\varphi(s,\omega,x));
\end{equation}
		
		\item a stochastic flow $\{\psi_{s,t}:-\infty<s\leq t<\infty\}$ defined by
		$$
		\psi_{s,t}(\omega,x)=\varphi(t-s,\theta_s \omega,x)
		$$
		is the Arratia flow with drift $a$ in the sense of definition \ref{def23_03}.
	\end{enumerate}
	
\end{theorem}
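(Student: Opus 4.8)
The plan is to realize the Arratia flow with drift concretely as a cocycle over a space-time shift, built from a countable system of coalescing trajectories, and then to restrict to a shift-invariant set of full measure on which the cocycle identity holds for every~$\omega$. Fix the translation-invariant countable dense set $D=\mathbb{Q}^2\subset\mathbb{R}^2$ of space-time points. Consistency of the coalescing transition semigroups $\{P^{(n),c}_t\}$ from the Le Jan--Raimond lemma recalled above, together with the sample continuity of the one-point motions of \eqref{6_11_eq1}, yields a probability space $(\Omega_0,\mathcal{F}_0,\mathbb{P}_0)$ carrying, for each $(s,q)\in D$, a continuous path $X^{(s,q)}\colon[s,\infty)\to\mathbb{R}$ with $X^{(s,q)}(s)=q$, whose finite joint laws are those of coalescing solutions of \eqref{6_11_eq1}; one takes $\Omega_0$ to be the corresponding canonical path space with its Borel $\sigma$-field $\mathcal{F}_0$. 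I would record the $\mathbb{P}_0$-almost sure structural facts used throughout: (i) joint continuity of the paths; (ii) once two paths are equal at some time they agree thereafter; (iii) non-crossing, $X^{(s,q_1)}(u)\le X^{(s,q_2)}(u)$ for all $u\ge s$ whenever $q_1\le q_2$, since by (ii) the first time the two paths coincide is their coalescing time and before that time they cannot cross by continuity.

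The flow is then defined by a monotone right-continuous extension. For rational $s\le t$ put $\psi_{s,t}(\omega,x):=\lim_{q\downarrow x,\,q\in\mathbb{Q}}X^{(s,q)}(t)$, the limit existing by (iii); for arbitrary real $s<t$ put $\psi_{s,t}(\omega,x):=\lim_{r\uparrow s,\,r\in\mathbb{Q}}\psi_{r,t}(\omega,x)$ and $\psi_{s,s}(\omega,x):=x$, where convergence of the second limit follows from (i)--(iii) because, as $r\uparrow s$, the path from $(r,x)$ is forced close to and then coalesces with the path from $(s,x)$. On a full-measure set one checks that $x\mapsto\psi_{s,t}(\omega,x)$ is non-decreasing and right-continuous, $t\mapsto\psi_{s,t}(\omega,x)$ is continuous, $\psi_{s,s}=\mathrm{id}$, and the evolutionary property \eqref{eq23_03} holds for all real $r\le s\le t$ (squeezing trajectories from real points between trajectories from rational points on either side). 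Introduce the shift $\theta_t\colon\Omega_0\to\Omega_0$ by translating the whole family in space-time, so that the path of $\theta_t\omega$ started at $(s,q)$ is $u\mapsto X^{(s+t,q)}(u+t)$; invariance of $D$ under translations makes $(\theta_t)_{t\in\mathbb{R}}$ a measurable group, and the autonomy of \eqref{6_11_eq1} makes the coalescing system stationary in time, whence $(\theta_t)_\ast\mathbb{P}_0=\mathbb{P}_0$. One checks that $\psi_{s,t}(\theta_r\omega,x)=\psi_{s+r,t+r}(\omega,x)$; hence with $\varphi(t,\omega,x):=\psi_{0,t}(\omega,x)$ one has $\psi_{s,t}(\omega,x)=\varphi(t-s,\theta_s\omega,x)$, and the cocycle identity \eqref{eq28_1} is precisely the evolutionary property \eqref{eq23_03} of $\omega$ read at the times $0\le s\le s+t$. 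Joint measurability of $\varphi$ in $(t,\omega,x)$ follows from monotonicity and right-continuity in $x$, continuity in $t$, and measurability in $\omega$.

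It remains to identify the law of the resulting flow and to make the cocycle perfect. For fixed reals $x_1\le\cdots\le x_n$ and fixed real $s$, the process $t\mapsto(\psi_{s,s+t}(\omega,x_i))_{i}$ is the $\mathbb{P}_0$-almost sure, locally uniform limit of $n$-point motions of the $D$-system, which have transition probabilities $\{P^{(n),c}_t\}$ by construction; passing to the limit using the Feller property of $\{P^{(n),c}_t\}$ and the non-crossing (iii) shows that the limit process again has transition probabilities $\{P^{(n),c}_t\}$, and independence and stationarity of increments are inherited from the $D$-system. Thus the flow $(s,t)\mapsto\bigl(x\mapsto\varphi(t-s,\theta_s\omega,x)\bigr)$ is an Arratia flow with drift $a$ in the sense of Definition~\ref{def23_03}. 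For perfection, let $\Omega\subseteq\Omega_0$ be the set of those $\omega$ for which (i)--(iii) and all the pathwise identities above hold simultaneously for all rational values of the parameters; by right-continuity and the limiting definitions they then hold for all real values, the set $\Omega$ is $\theta$-invariant with $\mathbb{P}_0(\Omega)=1$, and restricting $\mathbb{P}_0$, $\mathcal{F}_0$, $(\theta_t)$ and $\varphi$ to $\Omega$ yields the asserted structure.

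The part I expect to be the main obstacle is the regularity underlying the construction of $\psi$. First, the pathwise construction of $\psi_{s,t}(\omega,x)$ for \emph{irrational} starting times $s$, where monotonicity is no longer available and the limit has to be extracted from the coalescence and non-crossing properties (i)--(iii). Second, establishing the flow relations \emph{everywhere} on a set that is at once of full measure and $\theta$-invariant, so that discarding the null set does not destroy the group property of the shift. Third, verifying that this monotone right-continuous extension carries exactly the coalescing finite-point laws, and not some other monotone coupling of solutions of \eqref{6_11_eq1} --- this is where the Feller property of $\{P^{(n),c}_t\}$ and the non-crossing of two-point motions do the essential work. Once these are in place, the remaining assertions --- invariance of $\mathbb{P}_0$ under each $\theta_t$, the cocycle identity, and joint measurability of $\varphi$ --- are routine.
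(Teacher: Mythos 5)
The paper does not actually prove this statement: Theorem \ref{thm1} is imported verbatim from \cite{Riabov}, so there is no internal proof to compare against. Judged on its own terms, your outline follows the classical skeleton-plus-monotone-extension construction of coalescing flows, but it breaks down exactly where the cited theorem is nontrivial, namely in producing a \emph{measurable group} $(\theta_t)_{t\in\mathbb{R}}$ together with a \emph{perfect} cocycle. Your state space $\Omega_0$ carries paths indexed by $D=\mathbb{Q}^2$, and you define $\theta_t$ by sending the path labelled $(s,q)$ to $u\mapsto X^{(s+t,q)}(u+t)$; this requires $(s+t,q)\in D$, which fails for every irrational $t$, since $\mathbb{Q}^2$ is invariant only under rational time shifts. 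So $(\theta_t)_{t\in\mathbb{R}}$ is simply not defined on your $\Omega_0$ for $t\notin\mathbb{Q}$, and enlarging the index set to all of $\mathbb{R}^2$ to repair this destroys the countability on which your almost-sure statements (i)--(iii) and the measurability of the canonical $\sigma$-field rest. Relatedly, your definition of $\psi_{s,t}$ for irrational starting time $s$ as $\lim_{r\uparrow s}\psi_{r,t}(\omega,x)$ is justified by coalescence with ``the path from $(s,x)$'', which does not exist in the skeleton for irrational $s$; the argument is circular, and the family of paths started at $(r,x)$ is not monotone in $r$, so the existence of that limit requires a genuine proof rather than an appeal to (i)--(iii).

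The second gap is the perfection step. You discard a null set to obtain $\Omega$ on which all pathwise identities hold and assert that $\Omega$ is $\theta$-invariant of full measure, but the natural candidate $\bigcap_{t\in\mathbb{R}}\theta_t(\Omega_0')$ is an uncountable intersection whose measurability and full measure are not automatic; and since the maps $x\mapsto\psi_{s,t}(\omega,x)$ are a.s.\ step functions, one cannot upgrade identities from rational to all real parameters by continuity in the spatial variable, as one would for a continuous flow. This is precisely the content that Remark \ref{rem23_03} attributes to \cite{Riabov}: combining the exception-free cocycle property with a measurable group of shifts is the hard part of that theorem, and it is handled there by a dedicated construction, not by the restriction you describe. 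By contrast, your identification of the finite-point laws via the Feller property of $\{P^{(n),c}_t\}$ and the non-crossing of the rational skeleton is standard and essentially correct.
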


\begin{remark}\label{rem23_03} In the terminology of random dynamical systems, evolutionary property \eqref{eq23_03} of $\psi$ can be rephrased as a perfect cocycle property of $\varphi.$ In this paper we deal only with stochastic flows that satisfes the evolutionary property with no exceptions. Such modifications of coalescing stochastic flows already appeared in \cite{Arratia2} (for the Arratia flow) and in \cite{LeJanRaimond, Darling} (for general stochastic flows). Main distinction of a theorem \ref{thm1} modification is that it combines perfect cocycle property of $\varphi$ with the measurability of the group of shifts $\theta.$ It must be noted that a number of various modifications of the Arratia flow that do not deal with the group of shifts of underlying probability space appeared in \cite{Harris, FINR, NT, SSS, BGS}. 
	
\end{remark}

The long-time behaviour of the Arratia flow (in the driftless case $a=0$) looks simple. Indeed, trajectories in the Arratia flow move like independent Wiener processes before meeting, hence with probability $1$ each pair of trajectories meets in a finite time and coalesces into a one trajectory. From this point of view it is natural to ask whether there is a path in the Arratia flow which is infinite in both directions, i.e. is there a (random) continuous  function $t\to \eta_t(\omega),$ $t\in\mathbb{R},$ such that for all $t\in\mathbb{R},$ $\omega\in\Omega$  there exist (random) $s\leq t$ and $x\in\mathbb{R}$ with
$$
\eta_t(\omega)=\psi_{s,t}(\omega,x), \ t\geq s?
$$
If true this would indicate the existence of a stationary point for the corresponding random dynamical system in the sense of the following standard definition.

\begin{definition}
	\label{def1} \cite[\S 1.4]{Arnold} A random variable $\eta$ is a stationary point for the random dynamical system $\varphi,$ if there exists a forward-invariant set of full-measure $\Omega_0\in \mathcal{F}$  (i.e. $\theta_t(\Omega_0)\subset \Omega_0$ for all $t\geq 0$), such that for all $\omega\in\Omega_0$ and $t\geq 0$
	$$
	\varphi(t,\omega,\eta(\omega))=\eta(\theta_t \omega).
	$$
\end{definition}

Importance of stationary points for random dynamical systems  stems from the fact that they define invariant measures for the skew-product flow $\Theta_t(\omega,x)=(\theta_t \omega,\phi(t,\omega,x))$ by the relation $\mu(d\omega,dx)=\delta_{\eta(\omega)}(dx)\mathbb{P}(d\omega).$  Conversely, for a strictly monotone continuous random dynamical system on $\mathbb{R}$ every ergodic invariant measure for the skew-product flow $\Theta$ is generated by some stationary point \cite[Th. 1.8.4]{Arnold}.

The main question we address in this paper is the existence (and uniqueness) of a stationaty point for an Arratia flow with drift. Comparing to the well-studied case of continuous random dynamical systems \cite{Arnold,DimitroffScheutzow, CrauelDimitroffScheutzow} there are two new effects brought by coalescence. Consider the case of the Arratia flow $\{\psi_{s,t}:-\infty<s\leq t <\infty\}$ at first, i.e. assume that the drift $a=0$.  
From continuity of trajectories and the perfect cocycle property \eqref{eq28_1} it follows that mappings $x\to \psi_{s,t}(x)$ are monotone. Typical approach of proving existence of a stationary point in this case is to apply the so-called pullback procedure \cite{Khasminskii,ArnoldSchmalfuss}: if the  limit  $\lim_{t\to\infty}\psi_{-t,0}(x)$ is shown to exist (in a suitable sense) and to be independent from $x$, then the random variable $\eta=\lim_{t\to\infty}\psi_{-t,0}(x)$ is a candidate to be a stationary point. However, for the Arratia flow $\psi$ the pullback procedure is inapplicable. Now, $h\to \psi_{s,s+h}(x)-x,$ $h\geq 0,$ is a Wiener processes and  $\psi_{-t,0}(x)$ is a random variable with Gaussian distribution and variance $t.$ Therefore the limit in the pullback procedure doesn't exist even in the sense of weak convergence. In fact, as we show in theorem \ref{thm3} of section 3, the Arratia flow does not possess a stationary point. Our approach is based on the properties of the dual flow. It is well-known \cite{Arratia, Arratia2, SoucaliucTothWerner} that one can construct simultaneously two Arratia flows  $\{\psi_{s,t}:-\infty<s\leq t<\infty\}$ and $\{\tilde{\psi}_{s,t}:-\infty<s\leq t<\infty\}$ with non-crossing trajectories, i.e. for any two starting points $(r,x)$ and $(s,y)$ with $r<s$ there are no two values $t_1,t_2\in[r,s]$ such that 
$$
\psi_{r,t_1}(x)>\tilde{\psi}_{-s,-t_1}(y) \mbox{ and }
\psi_{r,t_2}(x)<\tilde{\psi}_{-s,-t_2}(y).
$$
More precise, the non-crossing property holds for the flow $\{\psi_{s,t}:-\infty<s\leq t<\infty\}$ and the flow with a reversed time $\{\tilde{\psi}_{-s,-t}:-\infty<s\leq t<\infty\}.$  As it is shown in the section 3, the coalescing property for the dual flow $\tilde{\psi}$ violates the possibility for a stationary point in the Arratia flow $\psi.$

In order to make a pullback procedure convergent we consider the Arratia flow with drift $a$ that satisfies the strict  monotonicity condition 
\begin{equation}
\label{7_11_eq2}
(a(x)-a(y))(x-y)\leq -\lambda (x-y)^2.
\end{equation}
In section 2 it is proved that under this condition the limit $\eta=\lim_{t\to \infty}\psi_{-t,0}(x)$ in the pullback procedure exists (and is independent from $x$). In the case of a continuous flow the stationarity of $\eta$ immediately follows from the construction. However, for the Arratia flow with drift every mapping $\psi_{s,t},$ $s<t,$ is a.s. a step function \cite{Riabov, Dorogovtsev}. As a consequence, existence of the limit in a pullback procedure does not directly imply that the limit is a stationary point. To overcome this difficulty  a detailed analysis of the convergence is needed. It is done in theorem \ref{thm2} of section 2 where we prove the main result of the paper.

{\bf Theorem.}
{\it Let $\varphi$ be a random dynamical system that corresponds to the Arratia flow with the drift $a$ (in the sense of Theorem \ref{thm1}). Assume that the drift $a$ is Lipschitz and for some $\lambda>0$ and all $x,y\in \mathbb{R}$ one has
$$
(a(x)-a(y))(x-y)\leq -\lambda (x-y)^2.
$$
Then there exists a unique stationary point $\eta$ for the random dynamical system $\varphi.$ }

The condition \eqref{7_11_eq2} is one of the easiest conditions used in the theory of continuous random dynamical systems  to prove the existence of random attractors, see \cite{FlandoliGessScheutzow} and references therein. But the discontinuity of mappings $x\to \varphi(t,\omega,x)$ makes it impossible to apply well-known results on long-time behaviour of order-preserving random dynamical systems \cite{ArnoldSchmalfuss, FlandoliGessScheutzow, ArnoldChueshov, Chueshov} in our situation. The question about a weaker sufficient condition, e.g. 
$$
\limsup_{|x|\to \infty} \frac{a(x)x}{|x|^{1+\kappa}}<0, \ \kappa>0,
$$
\cite{Khasminskii, FlandoliGessScheutzow, Veretennikov}, will be studied in our future work. Another open problem is an adaptation of methods different from the pullback procedure \cite{Schmalfuss1, Schmalfuss2} to the existence of stationary points in coalescing stochastic flows.

Results of sections 2 and 3  lead naturally to the question about the dual flow for the Arratia flow with drift $a.$ In section 4 we show that finite-point motions of such flow come from the Arratia flow with drift $-a.$ In particular, under condition \eqref{7_11_eq2} trajectories in the dual flow do not meet with positive probability contrary to the case of the Arratia flow.

\section{Stationary point for an Arratia flow with drift.}

In this section we prove existence and uniqueness of a stationary point for the Arratia flow with a strictly monotone drift. Assume that $a:\mathbb{R}\to\mathbb{R}$ is a Lipschitz function that satisfies the condition \eqref{7_11_eq2}, i.e. 
$$
(a(x)-a(y))(x-y)\leq -\lambda (x-y)^2
$$
for some $\lambda>0$ and all $x,y\in\mathbb{R}.$ In this section $\{\psi_{s,t}:-\infty<s\leq t<\infty\}$ will denote the Arratia flow with drift $a$ in the sense of definition \ref{def23_03}. We will assume that the flow is given by a random dynamical system $\varphi$ (see theorem \ref{thm1} of the introduction),
$$
\psi_{s,t}(\omega,x)=\varphi(t-s,\theta_s\omega,x).
$$
The stationary point $\eta$ will be constructed as a limit in a pullback procedure, i.e. we will prove that 
\begin{equation}
\label{eq28_2}
\eta=\lim_{t\to \infty}\psi_{-t,0}(x)
\end{equation}
exists a.s. As it was mentioned in the introduction, stationarity of $\eta$ does not follow directly from this construction. To prove it we refine convergence in \eqref{eq28_2}. It appears that rather strong stabilization takes place: with probability $1$ for all $c>0$ and all $t\geq t_0(\omega,c)$ one has 
$$
\psi_{-t,0}(\omega,[-c,c])=\{\eta(\omega)\}.
$$
Then one can indeed pass to the limit as $t\to\infty$ in the relation
$$
\psi_{-t,0}(\theta_h\omega,x)=\varphi(h,\omega,\psi_{-t+h}(\omega,x))
$$
and deduce that $\eta$ is a stationary point. In a sense, coalescence replaces continuity for the random dynamical system $\varphi$. 

To establish these results we estimate the first meeting time of two trajectories $\psi_{0,\cdot}(x)$ and $\psi_{0,\cdot}(y)$ by comparing their difference with an Ornstein-Uhlenbeck process (lemma \ref{lem2}). 
Now we are in a position to formulate and prove the main result.

\begin{theorem}
	\label{thm2} Let $\varphi$ be a random dynamical system that corresponds to the Arratia flow with the drift $a$ (in the sense of theorem \ref{thm1}). Assume that the drift $a$ is Lipschitz and for some $\lambda>0$ and all $x,y\in \mathbb{R}$ one has
	$$
	(a(x)-a(y))(x-y)\leq -\lambda (x-y)^2.
	$$
	Then there exists a unique stationary point $\eta$ for the random dynamical system $\varphi.$
\end{theorem}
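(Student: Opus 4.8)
The plan is to construct $\eta$ as the pullback limit $\eta(\omega)=\lim_{t\to\infty}\psi_{-t,0}(\omega,x)$ and, because for a random dynamical system with discontinuous $\varphi$ convergence of the pullback alone cannot give stationarity, to upgrade it to the strong stabilization property announced before the theorem: with probability one, for every $c>0$ there is $t_0(\omega,c)$ with $\psi_{-t,0}(\omega,[-c,c])=\{\eta(\omega)\}$ for all $t\ge t_0(\omega,c)$. The starting point is Lemma~\ref{lem2}. For $x<y$ the flow is monotone, so the gap $D(t)=\psi_{0,t}(y)-\psi_{0,t}(x)$ stays strictly positive up to the first meeting time $\tau_{x,y}$; on $[0,\tau_{x,y})$ the two trajectories move like independent solutions of \eqref{6_11_eq1}, so the martingale part of $D$ has quadratic variation $2t$, while the drift of $D$ is $a(\psi_{0,t}(y))-a(\psi_{0,t}(x))\le-\lambda D(t)$ by \eqref{7_11_eq2}. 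Comparing $D$ with the Ornstein--Uhlenbeck process $dU=-\lambda U\,dt+\sqrt2\,dB$, $U(0)=y-x$, driven by the same Brownian motion gives $0<D(t)\le U(t)$ on $[0,\tau_{x,y})$, so $\tau_{x,y}$ is dominated by the hitting time of $0$ by $U$. Hence $\tau_{x,y}<\infty$ a.s., and the classical exponential estimate for hitting times of a recurrent diffusion gives $\mathbb{P}(\tau_{x,y}>t)\le C(y-x)\,e^{-\delta t}$ for some $\delta>0$ and a function $C$ of at most polynomial growth.

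I would then prove the stabilization by a Borel--Cantelli argument using the independence of the flow over disjoint time intervals. Fix $c>0$. For an integer $n\ge1$, the evolutionary property gives $\psi_{-t,0}=\psi_{-n,0}\circ\psi_{-t,-n}$ for every $t\in[n,n+1]$, so by monotonicity $\psi_{-t,0}([-c,c])\subset\psi_{-n,0}(J_n)$, where $J_n$ is the smallest interval containing $\psi_{-t,-n}([-c,c])$ for all $t\in[n,n+1]$. The interval $J_n$ is measurable with respect to the increments of the flow over $[-n-1,-n]$, hence independent of $\psi_{-n,0}$, and its length $|J_n|$ has, uniformly in $n$, exponential moments of all orders (it is controlled by the suprema over a unit time window of finitely many one-point motions, which are diffusions governed by \eqref{6_11_eq1} whose strongly contracting drift only helps). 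Conditioning on $J_n=[p,q]$ and applying the above to the two trajectories started at time $-n$ from $p$ and $q$,
$$
\mathbb{P}\big(\psi_{-n,0}(J_n)\ \text{is not a single point}\big)\ \le\ \mathbb{E}\big[C(|J_n|)\big]\,e^{-\delta n}\ \le\ C'e^{-\delta n},
$$
which is summable; Borel--Cantelli then gives that a.s.\ $\psi_{-t,0}(\omega,[-c,c])$ is a single point for all $t$ large, and intersecting over $c\in\mathbb{N}$ with monotonicity in $c$ completes the stabilization statement. The step I expect to be the main obstacle is exactly this passage from integer to arbitrary real times, i.e.\ the uniform-in-$t\in[n,n+1]$ control of $\psi_{-t,-n}([-c,c])$; it relies, beyond the independence and the tail bounds above, on a careful use of the non-crossing structure of the one-point motions of $\psi$.

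For stationarity, on the full-measure set $\Omega_0$ where stabilization holds I would let $\eta(\omega)$ be the eventual value of $\psi_{-t,0}(\omega,[-c,c])$; it is independent of $c$, since for $c'>c$ and $t$ large $\{\eta_c(\omega)\}=\psi_{-t,0}([-c,c])\subset\psi_{-t,0}([-c',c'])=\{\eta_{c'}(\omega)\}$, and in particular $\psi_{-t,0}(\omega,x)=\eta(\omega)$ for $t$ large, for every $x$. From $\psi_{s,t}(\omega,\cdot)=\varphi(t-s,\theta_s\omega,\cdot)$ and the perfect cocycle property \eqref{eq28_1}, for $h\ge0$,
$$
\varphi\big(h,\omega,\psi_{-t,0}(\omega,x)\big)=\varphi(t+h,\theta_{-t}\omega,x)=\psi_{-(t+h),0}(\theta_h\omega,x);
$$
a short computation with $\psi_{h-t,h}=\psi_{0,h}\circ\psi_{h-t,0}$ shows $\Omega_0$ is forward invariant, and then letting $t\to\infty$ on $\Omega_0$ turns the left side into $\varphi(h,\omega,\eta(\omega))$ and the right side into $\eta(\theta_h\omega)$, so $\eta$ is a stationary point in the sense of Definition~\ref{def1}.

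Finally, for uniqueness, let $\eta'$ be any stationary point. Evaluating its stationarity relation at $\theta_{-n}\omega$ — valid for a.e.\ $\omega$ for each integer $n$, as $\theta_n$ preserves $\mathbb{P}$ — and intersecting over $n$ gives a full-measure set on which $\psi_{-n,0}(\omega,\eta'(\theta_{-n}\omega))=\eta'(\omega)$ for all integers $n\ge0$. Since $\eta'$ is a.s.\ finite and $\theta$ preserves $\mathbb{P}$, a Fatou estimate gives $\mathbb{P}\big(\limsup_n\{|\eta'(\theta_{-n}\omega)|\le c\}\big)\ge\mathbb{P}(|\eta'|\le c)$ for each $c$, whence $\liminf_n|\eta'(\theta_{-n}\omega)|<\infty$ a.s. Thus a.e.\ $\omega$ admits $c$ and infinitely many integers $n\ge t_0(\omega,c)$ with $\eta'(\theta_{-n}\omega)\in[-c,c]$, and then $\eta'(\omega)=\psi_{-n,0}(\omega,\eta'(\theta_{-n}\omega))\in\psi_{-n,0}(\omega,[-c,c])=\{\eta(\omega)\}$. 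Hence $\eta'=\eta$ a.s.
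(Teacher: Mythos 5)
Your proposal follows the same overall strategy as the paper -- pullback limit, the Ornstein--Uhlenbeck comparison for the meeting time (your Lemma~\ref{lem2} argument is essentially identical, except that the paper reads off the rate $e^{-\lambda t}$ from the explicit Pitman--Yor density of the hitting time rather than invoking a generic recurrence estimate), Borel--Cantelli along integers, and then a separate argument to pass to continuous time, which is indeed the crux. Where you genuinely diverge is in that last passage and in the uniqueness proof. For continuous time the paper does not look at the image of $[-c,c]$ under $\psi_{-t,-n}$ at all: it runs the pullback from the moving points $\pm n$, proves $\sum_n\mathbb{P}(\exists t\in[0,1]:\psi_{-n,-n+t}(n)\le c)<\infty$ via a second OU comparison with Gaussian tails, and then sandwiches $\psi_{-t,0}(\pm c)$ between $\psi_{-n,0}(\pm n)=\eta$ using monotonicity. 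Your route instead controls the random interval $J_n\supset\bigcup_{t\in[n,n+1]}\psi_{-t,-n}([-c,c])$ and exploits independence of $J_n$ from $\psi_{-n,0}$ together with Lemma~\ref{lem2} applied to the endpoints of $J_n$; note that only $\mathbb{E}|J_n|<\infty$ is needed, not exponential moments, since the bound of Lemma~\ref{lem2} is linear in $|x-y|$. For uniqueness the paper compares two arbitrary stationary points directly, bounding $\mathbb{P}(|\eta_1-\eta_2|>\delta)$ by $2\varepsilon+\mathbb{P}(\psi_{-t,0}(c)-\psi_{-t,0}(-c)>\delta)$ and letting $t\to\infty$; your recurrence argument (infinitely many $n$ with $\eta'(\theta_{-n}\omega)\in[-c,c]$, then absorption of $\eta'(\omega)$ into $\psi_{-n,0}(\omega,[-c,c])=\{\eta(\omega)\}$) is a valid alternative that leans on the stabilization you already established.

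The one place where your argument is asserted rather than proved is exactly the step you flag: that $J_n$ is a.s.\ a bounded interval controlled by ``finitely many one-point motions.'' The family $\{\psi_{-t,-n}(\pm c):t\in[n,n+1]\}$ involves a continuum of starting times, so this reduction is not automatic; you need the non-crossing sandwich, e.g.\ start a single trajectory at time $-(n+1)$ from $c+M$, show it stays above $c$ throughout $[-(n+1),-n]$ except on an event of probability decaying in $M$, and conclude that on its complement it dominates every $\psi_{-t,-n}(c)$ at time $-n$. That estimate is precisely the paper's bound $\mathbb{P}(\exists t\in[0,1]:X_n(t)\le c)\le ce^{-an^2}$ applied at a fixed level rather than at level $n$, so the technical content is available; but as written your proof leaves this link as an acknowledged obstacle rather than closing it, and it should be spelled out before the Borel--Cantelli step can be run.
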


\begin{proof} 
	
	Given a Wiener process $w$ we will denote by $X_x(t)$ a strong solution of the equation 
	\eqref{6_11_eq1} that starts from the point $x$ at a time $0,$ i.e.
	\begin{equation}
	\label{9_11_eq4}
	\begin{cases}
	dX_x(t)=a(X_x(t))dt+dw(t), \ t\geq 0 \\
	X_x(0)=x.
	\end{cases}
	\end{equation}

The following estimate is well-known, we refer to \cite[Ch. 4]{Mao} for the details.

\begin{lemma}
	\label{lem1}
	There exists $C>0$ such that for all $x\in \mathbb{R}$ and $t\geq 0$
	$$
	\mathbb{E}X^2_x(t)\leq C(1+x^2)
	$$
\end{lemma}

Next we estimate the distribution of the meeting time for processes from the flow. Let $X_x$ and $X_y$ be solutions to \eqref{9_11_eq4} with starting points $x$ and $y$ and independent Wiener processes $w_1$ and $w_2,$ respectively. Denote by $\tau_{x,y}$ the meeting time of $X_x$ and $X_y:$
$$
\tau_{x,y}=\inf\{t\geq 0: X_x(t)=X_y(t)\}.
$$
Let $g(t;x,y)=\mathbb{P}(\tau_{x,y}>t).$

\begin{lemma}
\label{lem2}
	There exists a constant $C>0$ such that for all $t\geq \frac{\log 2}{2\lambda}$ and all $x,y$ one has
	$$
	g(t;x,y)\leq C|x-y|e^{-\lambda t}
	$$
\end{lemma}

\begin{proof}
	Assume that $x>y.$ Consider a Wiener process $B(t)=\frac{w_1(t)-w_2(t)}{\sqrt{2}}$ and represent the  difference $X_x(t)-X_y(t)$ in the form
	$$
	\frac{X_x(t)-X_y(t)}{\sqrt{2}}=\frac{x-y}{\sqrt{2}}+\int^t_0\frac{a(X_x(s))-a(X_y(s))}{\sqrt{2}}ds+B(t)
	$$
Introduce the Ornstein-Uhlenbeck process $Z$ governed by the Wiener process $B,$  
	$$
	Z(t)=\frac{x-y}{\sqrt{2}}-\lambda \int^t_0 Z(s)ds+B(t),
	$$
	and consider the difference $D(t)=\frac{X_x(t)-X_y(t)}{\sqrt{2}}-Z(t).$ For all $t< \tau_{x,y}$ we have $X_x(t)>X_y(t)$ and the strict monotonicity condition \eqref{7_11_eq2} implies 
	$$
	D'(t)=\frac{a(X_x(t))-a(X_y(t))}{\sqrt{2}}+\lambda Z(t)\leq -\lambda D(t).
	$$
As $D(0)=0$ it follows that $D(t)\leq 0$ for $t\in[0,\tau_{x,y}].$ Hence, for $t\in [0,\tau_{x,y})$ $Z(t)> 0$ and the moment of meeting $\tau_{x,y}$ is less than the moment when $Z(t)$ hits zero. The distribution density $p(t)$ of the latter moment is well known \cite{PitmanYor}:
	$$
	p(t)=\frac{|x-y|}{2\sqrt{\pi}}\bigg(\frac{2\lambda}{e^{\lambda t}-e^{-\lambda t}}\bigg)^{3/2}\exp\bigg(-\frac{\lambda (x-y)^2 e^{-\lambda t}}{2(e^{\lambda t}-e^{-\lambda t})}+\frac{\lambda t}{2}\bigg)
	$$
	For $t\geq \frac{\log 2}{2\lambda}$ one has $e^{\lambda t}-e^{-\lambda t}\geq \frac{1}{2}e^{\lambda t}$ and 
	$$
	p(t)\leq C|x-y| e^{-\lambda t}
	$$
	Consequently,
	$$
	g(t;x,y)=\mathbb{P}(\tau_{x,y}>t)\leq \int^\infty_t p(s)ds\leq \frac{C}{\lambda}|x-y|e^{-\lambda t}.
	$$
	
\end{proof}

Applying lemma \ref{lem2} to  two-point motions of the Arratia flow with drift $\psi$ we deduce that 
$$
\mathbb{P}(\psi_{-s,0}(x)\ne \psi_{-s,0}(y))=g(s;x,y)\leq C|x-y|e^{-\lambda s}.
$$
In the next lemma we obtain similar estimate for trajectories that started at distinct times. It is done using independence and stationarity of increments of the flow and lemma \ref{lem1}.

\begin{lemma}
\label{lem3}
	There exists a constant $C>0$ such that for any $s\geq \frac{\log 2}{2\lambda}$ and all $t\geq s,$ $x,y\in\mathbb{R}$ one has
\begin{equation}
\label{eq28_4}
	\mathbb{P}(\psi_{-t,0}(x)\ne \psi_{-s,0}(y))\leq C(1+|x|+|y|)e^{-\lambda s}.
\end{equation}
\end{lemma}

\begin{proof} Evolutionary property \eqref{eq23_03} implies that 
	$$
	\mathbb{P}(\psi_{-t,0}(x)\ne \psi_{-s,0}(y))=	\mathbb{P}(\psi_{-s,0}(\psi_{-t,-s}(x))\ne \psi_{-s,0}(y))=
	\mathbb{E}g(s;\psi_{-t,-s}(x),y).
	$$
	Combining estimates of lemmata \ref{lem1}, \ref{lem2} we obtain
	$$
	\begin{aligned}
	\mathbb{P}(\psi_{-t,0}(x)\ne \psi_{-s,0}(y))&\leq Ce^{-\lambda s}\mathbb{E}|\psi_{-t,-s}(x)-y|=Ce^{-\lambda s}\mathbb{E}|X_x(t-s)-y| \\
	& \leq C_1 (1+|x|+|y|)e^{-\lambda s},
	\end{aligned}
	$$
	with some different constant $C_1.$

\end{proof}

From the lemma \ref{lem3} it follows that the limit $\lim_{n\to\infty}\psi_{-n,0}(n)$
exists with probability $1.$ Indeed, inequality \eqref{eq28_4} implies that 
$$
\sum^\infty_{n=1} \mathbb{P}(\psi_{-n-1,0}(n+1)\ne \psi_{-n,0}(n))<\infty.
$$
By the Borel-Cantelli lemma, with probability 1 there exists $N=N(\omega)$ such that  the sequence $\{\psi_{-n,0}(\omega,n):n\geq N(\omega)\}$ is constant. We denote its limit as $\eta,$
$$
\eta(\omega)=\lim_{n\to\infty}\psi_{-n,0}(\omega,n).
$$
Further, $\eta(\omega)=\lim_{n\to\infty}\psi_{-n,0}(\omega,-n)$ a.s. Indeed, by the lemma 2.3
	$$
	\mathbb{P}(\psi_{-n,0}(n)\ne\psi_{-n,0}(-n))\leq C(1+2n)Ce^{-\lambda n}
	$$
for large enough $n.$  

From obtained convergences  and monotonicity of trajectories it follows that with probability 1 for every $x\in\mathbb{R}$ $\psi_{-n,0}(\omega,x)=\eta(\omega),$ $n\geq N(\omega,x).$ 

Next we show that convergence to $\lim_{n\to\infty}\psi_{-n,0}(x)=\eta$ can be strengthened to convergence along real numbers $t\to\infty.$ In order to do it we prove that for every fixed $c$ with probability 1
	
$\exists n_0: \  \forall n\geq n_0 \ \forall t\in[0,1]$
	$$
	\psi_{-n,-n+t}(\omega,n)>c, \ \ \psi_{-n,-n+t}(\omega,-n)<c.
	$$
This result means that trajectories that start at large  (negative) moments of time from far positions can't reach fixed level $c$ in a bounded time. 

It is sufficient to consider only the first relation and large enough $c$. The assertion will follow from convergence of the series
	\begin{equation}
	\label{eq11_08}
	\sum^\infty_{n=1}\mathbb{P}(\exists t\in[0,1] \ \psi_{-n,-n+t}(n)\leq c)<\infty.
	\end{equation}
Condition \eqref{7_11_eq2} implies that  $a(x_0)=0$ for some $x_0\in\mathbb{R}.$ Assume that $c>x_0.$ Let $\Lambda$ be the Lipschitz constant for $a:$
$$
|a(x)-a(y)|\leq \Lambda|x-y|.
$$
For $n>\max(c,2(e^\Lambda(c-x_0)+x_0))$ we will estimate the probability
$$
	\mathbb{P}(\exists t\in[0,1] \ \psi_{-n,-n+t}(n)\leq c)=\mathbb{P}(\exists t\in[0,1] \ X_n(t)\leq c).
	$$
Recall that the process $\{X_n(t):t\geq 0\}$ is defined by the equation
	$$
	X_n(t)=n+\int^t_0 a(X_n(s))ds+w(t)
	$$
	with some Wiener process $w.$ Again, let us denote by $Z$ the Ornstein-Uhlenbeck process, governed by the Wiener process $w,$
	$$
	Z(t)=n-x_0-\Lambda\int^t_0Z(s)ds+w(t).
	$$
Consider the moment $\tau_c$ when the process $X_n$ hits the level $c.$ For all $t\leq \tau_c$ we have $X_n(t)\geq c>x_0$ and
	$$
	a(X_n(t))=a(X_n(t))-a(x_0)\geq -\Lambda (X_n(t)-x_0).
	$$
Then the derivative of the expression
	$$
	Z(t)-X_n(t)+x_0=\int^t_0(-\Lambda Z(s)-a(X_n(s)))ds
	$$
satisfies 
	$$
	(Z(t)-X_n(t)+x_0)'=-\Lambda Z(t)-a(X_n(t))\leq -\Lambda (Z(t)-X_n(t)+x_0).
	$$
	So, for all $t\leq \tau_c$ we have $Z(t)\leq X_n(t)-x_0.$ In particular,
	$$
	\mathbb{P}(\exists t\in[0,1] \ X_n(t)\leq c)\leq \mathbb{P}(\exists t\in[0,1] \ Z(t)\leq c-x_0)=
	$$
	$$
	=\mathbb{P}\bigg(\exists t\in[0,1] \ \ \ e^{-\Lambda t}(n-x_0)+\int^t_0 e^{-\Lambda(t-s)}dw(s)\leq c-x_0\bigg)\leq 
	$$
	$$
	\leq \mathbb{P}\bigg(\exists t\in[0,1] \ \ \ \int^t_0 e^{\Lambda s}dw(s)\leq e^\Lambda( c-x_0)+x_0-n\bigg)\leq 
	$$
	$$
	\leq \mathbb{P}\bigg(\exists t\in[0,1] \ \ \ \int^t_0 e^{\Lambda s}dw(s)\leq -\frac{n}{2}\bigg)
	$$
	Representing the integral $\int^t_0 e^{\Lambda s}dw(s)$ as a Wiener process with changed time and using tail estimates for minimum of a Wiener process \cite[Prop. 11.13]{K}, we obtain
	$$
	\mathbb{P}(\exists t\in[0,1] \ X_n(t)\leq c)\leq \mathbb{P}\bigg(\min_{0\leq t\leq \frac{e^{2\Lambda}-1}{2\Lambda}}w(t)\leq -\frac{n}{2}\bigg)\leq ce^{-a n^2}
	$$
	The sum in \eqref{eq11_08} is convergent.

Summarizing obtained results, following two properties hold on a set of probability 1.

\begin{itemize}
\item There exists  $N(\omega)$ such that for all $n\geq N(\omega)$
$$
\psi_{-n,0}(\omega,n)=\psi_{-n,0}(\omega,-n)=\eta(\omega).
$$

\item For every $c\in\mathbb{R}$ there exists $M(\omega,c)$ such that  for all $n\geq M(\omega,c)$
	$$
	\min_{t\in[0,1]}\psi_{-n,-n+t}(\omega,n)>c, \  \max_{t\in[0,1]} \psi_{-n,-n+t}(\omega,-n)<c.
	$$
	
\end{itemize}

Fix $\omega$ such that these two properties hold. Given $c>0$ there exists $L(\omega)$ such that for all $n\geq L(\omega)$
$$
\psi_{-n,0}(\omega,n)=\psi_{-n,0}(\omega,-n)=\eta(\omega), 
$$
$$
\min_{t\in[0,1]}\psi_{-n,-n+t}(\omega,n)>c, \max_{t\in[0,1]} \psi_{-n,-n+t}(\omega,-n)<-c.
$$
If $t\geq L(\omega),$ $t=n-s$ with  $0\leq s\leq 1,$ then
$$
c<\psi_{-n,-t}(\omega,n)
$$
and from evolutionary property and monotonicity of trajectories we have 
$$
\psi_{-t,0}(\omega,c)\leq \psi_{-n,0}(\omega,n)=\eta(\omega).
$$
Similarly,
$$
\psi_{-t,0}(\omega,-c)\geq \psi_{-n,0}(\omega,-n)=\eta(\omega).
$$
Consequently, with probability 1 for every $c>0$ there exists $t_0$ such that for every $t\geq t_0$ and every $x\in[-c,c]$
$$
\psi_{-t,0}(\omega,x)=\eta(\omega),
$$
(see figure 1) and the set 
$$
\Omega_0=\{\omega\in \Omega: \exists t_0>0 \ \  \forall c>0 
$$
$$
\exists t_1\geq t_0 \ \ \forall t\geq t_1 \ \ \forall x\in[-c,c] \ \ \ \psi_{-t,0}(\omega,x)=\psi_{-t_0,0}(\omega,0)\}
$$
has probability $1$ (its measurability follows by restricting values of all variables to rational ones).  Also, $\Omega_0$ is $\theta_t-$invariant for $t\geq 0$.

\begin{figure}[h]
	\begin{center}
		\includegraphics[width=10cm]{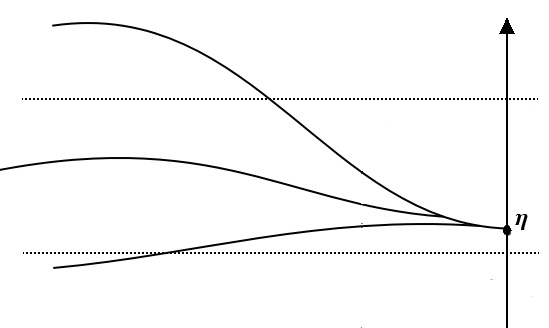}
		\caption{in sufficiently large time every set of trajectories started from a bounded set arrives to $\eta$}
	\end{center}
\end{figure}

Finally,  we show that the random variable $\eta$ is the needed stationary point. Recall that for any  $\omega\in\Omega_0$ for every $x\in\mathbb{R}$   there exists $t_0\geq 0$ such that for all $t\geq t_0$ one has
$$
\psi_{-t,0}(\omega,x)=\eta(\omega).
$$
Applying this property for $x=0,$ $\omega$ and $\theta_h \omega,$ we can find $t_0\geq 0$ such that for all $t\geq t_0$ one has 
$$
\psi_{-t,0}(\omega,0)=\eta(\omega), \psi_{-t,0}(\theta_h\omega,0)=\eta(\theta_h\omega).
$$ 
For such $t$ we have
$$
\varphi(h,\omega,\eta(\omega))=\psi_{0,h}(\omega,\eta(\omega))=\psi_{0,h}(\omega,\psi_{-t,0}(\omega,0))=
$$
$$
=\psi_{-t,h}(\omega,0)=\psi_{-t-h,0}(\theta_h\omega,0)=\eta(\theta_h\omega).
$$
In the last passage we used that $t+h\geq t_0.$ Existence of a stationary point is proved.

In order to show uniqueness, assume that $\eta_1$ and $\eta_2$ are stationary points for the random dynamical system $\varphi$. Given $\varepsilon>0$ find $c>0$ such that 
$$
\mathbb{P}(|\eta_1|>c)<\varepsilon, \mathbb{P}(|\eta_2|>c)<\varepsilon.
$$
Using the relation $\psi_{-t,0}(\omega,\eta_j(\theta_{-t}\omega))=\eta_j(\omega)$ and order-preserving property, we can estimate the difference $\eta_1-\eta_2$ as follows:
$$
\mathbb{P}(|\eta_1-\eta_2|>\delta)\leq 2\varepsilon+
$$
$$
+\mathbb{P}(\{\omega: |\psi_{-t,0}(\omega,\eta_1(\theta_{-t}\omega))-\psi_{-t,0}(\omega,\eta_1(\theta_{-t}\omega))|>\delta, |\eta_1(\theta_{-t}\omega)|\leq c, |\eta_2(\theta_{-t}\omega)|\leq c)\leq
$$
$$
\leq 2\varepsilon+\mathbb{P}(\psi_{-t,0}(c)-\psi_{_t,0}(-c)>\delta)
$$
The probability in the latter expression converges to zero, as $t\to \infty$ (lemma \ref{lem3}). It follows that $\eta_1=\eta_2$ a.s. The theorem is proved.

\end{proof}

\section{Non-existence of a stationary point for the Arratia flow}

 In this section we show that the Arratia flow does not possess  a stationary point. Throughout the section we assume that the drift in \eqref{6_11_eq1} is $a=0.$ As it was mentioned in the introduction, one can define simulatenously two Arratia flows $\{\psi_{s,t}:-\infty<s\leq t<\infty\}$ and  $\{\tilde{\psi}_{s,t}:-\infty<s\leq t<\infty\}$ in such a way that trajectories of $\psi$ (in forward time) and $\tilde{\psi}$ (in backward time) do not cross each other. Without loss of generality we may assume that the Arratia flow $\psi$ is the one given by the random dynamical system $\varphi,$ see the construction of $\varphi$  \cite[Th. 1.1]{Riabov}.  An expression for $\tilde{\psi}$ as a function of $\psi$ is given in  \cite{DK}. The joint distribution of forward and  backward trajectories was studied in \cite{SoucaliucTothWerner}. It was proved that in distribution finite-point motions of the Arratia flow and its dual  coincide with coalescing-reflecting Wiener processes, where the reflection is understood in the sense of Skorokhod \cite{Skorokhod}. 

\begin{theorem}
	\label{thm3} There is no stationary point in a random dynamical system $\varphi$ generated by the Arratia flow.
\end{theorem}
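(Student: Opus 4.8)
The plan is to argue by contradiction and to exploit, exactly as announced, the coalescence of the dual flow. Suppose $\eta$ is a stationary point of $\varphi$. Using $\psi_{s,t}(\omega,x)=\varphi(t-s,\theta_s\omega,x)$ and Definition~\ref{def1}, stationarity is equivalent to
$$
\psi_{-t,0}(\omega,\eta(\theta_{-t}\omega))=\eta(\omega)\qquad\text{for all }t\ge0,\ \omega\in\Omega_0 .
$$
Hence the random function $s\mapsto\gamma_\omega(s):=\eta(\theta_s\omega)$, $s\in\mathbb R$, is a two-sided trajectory of the flow: $\psi_{s,u}(\omega,\gamma_\omega(s))=\gamma_\omega(u)$ for all $s\le u$ (in particular, on any interval $[-t,0]$ it is a genuine forward trajectory issued from $(-t,\gamma_\omega(-t))$). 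So if a stationary point exists, there is a trajectory of the Arratia flow infinite in both directions whose value at time $0$ is $\eta(\omega)$.

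The first real step is to pin down the backward half of this trajectory by means of the dual flow. Couple $\psi$ with its dual $\tilde\psi$ so that forward trajectories of $\psi$ and backward trajectories of $\tilde\psi$ do not cross; by \cite{DK} the dual $\tilde\psi$ is a deterministic functional of $\psi$, and the coupling is the one of \cite{SoucaliucTothWerner}. For a rational $q$ let $t\mapsto\beta^q_t(\omega)$, $t\ge0$, denote the trajectory of $\tilde\psi$ issued from $q$ at time $0$ and evaluated at time $-t$; this is a Brownian motion. By the non-crossing property, on $\{\eta(\omega)>q\}$ one has $\gamma_\omega(-t)\ge\beta^q_t(\omega)$ for every $t\ge0$, and on $\{\eta(\omega)<q\}$ one has $\gamma_\omega(-t)\le\beta^q_t(\omega)$ for every $t\ge0$. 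Consequently, on the event $\{q_1<\eta(\omega)<q_2\}$ with $q_1<q_2$ rational,
$$
\beta^{q_1}_t(\omega)\ \le\ \eta(\theta_{-t}\omega)\ \le\ \beta^{q_2}_t(\omega)\qquad\text{for all }t\ge0 .
$$
The dual flow is itself coalescing, so the Brownian motions $\beta^{q_1}$ and $\beta^{q_2}$ meet at an a.s.\ finite time $S(q_1,q_2,\omega)$ and coincide afterwards; for $t\ge S$ the two bounds agree, hence $\eta(\theta_{-t}\omega)=\beta^{q_1}_t(\omega)$. Substituting this into the stationarity identity, on $\{q_1<\eta(\omega)<q_2\}$ and for all $t\ge S(q_1,q_2,\omega)$,
$$
\eta(\omega)=\psi_{-t,0}\bigl(\omega,\beta^{q_1}_t(\omega)\bigr).
$$

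The point of this identity is that its right-hand side is measurable with respect to $\mathcal F_{\le0}:=\sigma(\psi_{u,v}:u\le v\le0)$: the map $\psi_{-t,0}$ is generated by the increments of $\psi$ on $[-t,0]$, and $\beta^{q_1}_t$ is built from $\tilde\psi$ on $[-t,0]$, which by \cite{DK} is a functional of those same increments. Since the right-hand side is eventually constant in $t$ on the event, for each pair of rationals $q_1<q_2$ the limit $\zeta_{q_1}(\omega):=\lim_{t\to\infty}\psi_{-t,0}(\omega,\beta^{q_1}_t(\omega))$ exists on $\{q_1<\eta(\omega)<q_2\}$ and equals $\eta(\omega)$ there; letting $q_2$ run through the rationals above $q_1$ gives $\zeta_{q_1}=\eta$ on $\{q_1<\eta\}$. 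Each $\zeta_{q_1}$ is $\mathcal F_{\le0}$-measurable, and for a.e.\ $\omega$ the sequence $(\zeta_{-n}(\omega))_n$ is eventually equal to $\eta(\omega)$, so $\eta$ agrees a.s.\ with the $\mathcal F_{\le0}$-measurable random variable $\lim_n\zeta_{-n}$. (One can push this further and check that $\eta$ is in fact also a stationary point of the dual flow, but only the past-measurability is needed.) Now, by the independence of increments of $\psi$ over disjoint time intervals, $\mathcal F_{\le0}$ is independent of the $\sigma$-field carrying the random map $\psi_{0,t}$, $t>0$; hence $\eta(\omega)$ is independent of $\psi_{0,t}(\omega,\cdot)$, whose value at a fixed point $x$ is $N(x,t)$-distributed because the drift is $a=0$. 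Conditioning on the value of $\eta$ and using that $\theta_t$ preserves $\mathbb P$, the relation $\eta(\theta_t\omega)=\psi_{0,t}(\omega,\eta(\omega))$ forces the law $\mu$ of $\eta$ to satisfy $\mu=\mu*N(0,t)$ for all $t>0$; equivalently $\widehat\mu(\xi)=\widehat\mu(\xi)e^{-t\xi^2/2}$, so $\widehat\mu$ vanishes off the origin, contradicting $\widehat\mu(0)=1$ and the continuity of $\widehat\mu$. Therefore $\varphi$ has no stationary point.

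The main obstacle is the middle step: turning the non-crossing property into the statement that the backward orbit of $\eta$ eventually coincides with a dual trajectory started from a rational point, and thence into the $\mathcal F_{\le0}$-measurability of $\eta$. This is where all three ingredients enter together — the non-crossing of $\psi$ and $\tilde\psi$, the coalescence of the dual flow, and the representation of $\tilde\psi$ on a time interval as a functional of the increments of $\psi$ on that same interval. Once $\eta$ is known to be past-measurable, the concluding convolution argument is routine.
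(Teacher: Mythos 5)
Your argument is correct, and up to its midpoint it is exactly the paper's: both proofs turn the stationary point into a two-sided orbit $s\mapsto\eta(\theta_s\omega)$, sandwich its backward half between two dual trajectories started from rationals $q_1<\eta<q_2$ via the non-crossing property, and use coalescence of the dual flow to conclude that for $t$ beyond the (a.s.\ finite) coalescence time the backward orbit \emph{coincides} with the dual trajectory from $q_1$. From there the two proofs diverge. The paper stops immediately: a forward trajectory of $\psi$ coinciding with a backward trajectory of $\tilde\psi$ on a time interval of positive length contradicts the reflection structure of the coupling established in Soucaliuc--T\'oth--Werner, so no further work is needed. You instead upgrade the coincidence to $\mathcal F_{\le 0}$-measurability of $\eta$ (using that $\tilde\psi$ on $[-t,0]$ is a time-local functional of $\psi$ on $[-t,0]$, which is the content of the cited representation of the dual flow), and then close with the convolution identity $\mu=\mu*N(0,t)$ coming from independence of increments and measure preservation of $\theta_t$. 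Both endings are legitimate; they trade off different external inputs. The paper's ending needs the finer ``reflection, not coincidence'' fact about the forward/backward coupling; yours needs only that the dual is a past-measurable functional of the forward flow, plus a routine characteristic-function computation, which makes your conclusion somewhat more robust (it would survive in situations where the precise reflected structure of the coupling is unknown). Two small points of hygiene: your opening claim that stationarity gives $\psi_{-t,0}(\omega,\eta(\theta_{-t}\omega))=\eta(\omega)$ for all $\omega\in\Omega_0$ should be stated on the two-sided invariant refinement $\Omega_1=\bigcap_{t\ge0}\theta_t(\Omega_0)$, as the paper does, since the definition of a stationary point only guarantees the cocycle identity at $\theta_{-t}\omega$ when $\theta_{-t}\omega\in\Omega_0$; and you should define $\zeta_{q_1}$ as a $\limsup$ along a countable set of times to keep the $\mathcal F_{\le 0}$-measurability argument airtight. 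Neither affects the validity of the proof.
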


\begin{proof}  Assume on the contrary that a stationary point $\eta$ exists. Then $\eta$ is a random variable such that on the forward-invariant set $\Omega_0$ of full probability one has the following.
	$$
	\forall t\geq 0 \ \varphi(t,\omega,\eta(\omega))=\eta(\theta_t\omega), \ \omega\in\Omega_0.
	$$
At first we strengthen this property using that $(\theta_t)_{t\in\mathbb{R}}$ is a group of measure preserving transformations. Observe that the set $\Omega_1=\cap_{t\geq 0}\theta_t(\Omega_0)$ is $\theta_t$-invariant for all $t\in\mathbb{R}$ and has probability 1. For every $\omega\in\Omega_1$ and every $t\geq 0$ one has 
$$
\varphi(t,\theta_{-t}\omega,\eta(\theta_{-t}\omega))=\eta(\omega).
$$
In other words, with probability 1 for every $t\leq 0$ there exists $x\in \mathbb{R}$ such that 
$$
\psi_{-t,0}(x)=\eta.
$$

\begin{figure}[h]
\begin{center}
\includegraphics[width=10cm]{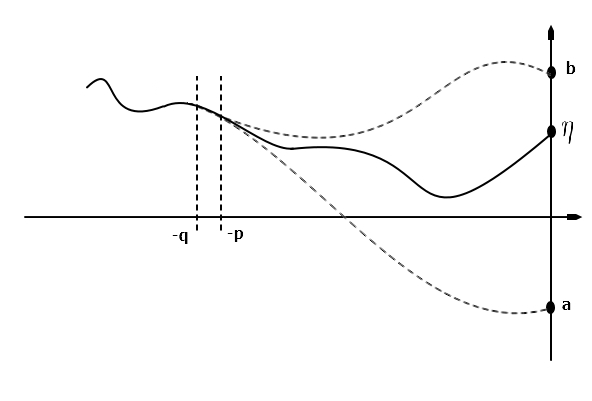}
\caption{dashed trajectories are from backward flow}
\end{center}
\end{figure}

Let rational points $a,b$ be such that $a<\eta<b$ with positive probability. Consider trajectories of the dual flow (in backward time) that start at time $0$ from the points $a$ and $b.$ As every two trajectories of the Arratia flow coalesce in a finite time, there is a time $p>0$ such that
$$
\tilde{\psi}_{0,p}(a)=\tilde{\psi}_{0,p}(b).
$$
For arbitrary $q>p$ consider the point $x$ such that $\psi_{-q,0}(x)=\eta.$ If $$
x>\tilde{\psi}_{0,q}(a)=\tilde{\psi}_{0,q}(b),
$$
then 
$$
\psi_{-q,0}(x)=\eta<b=\tilde{\psi}_{0,0}(b)
$$ and trajectories $t\to \psi_{-q,t}(x)$ and $t\to\tilde{\psi}_{0,-t}(b)$  intersect on the segment $[-q,0].$  Similarly, $x<\tilde{\psi}_{-q,0}(a)$ is impossible. So for all  $q>p$ we have $x=\tilde{\psi}_{0,q}(a)$ (see figure 2).  In particular, on the segment $[-q,-p]$ the trajectory $t\to \psi_{-q,t}(x)$ of the forward flow coincides with the trajectory $t\to \tilde{\psi}_{0,-t}(a)$ of the backward flow. It is impossible by \cite{SoucaliucTothWerner} and the theorem is proved.

	\end{proof}

\section{Dual flows}

Result of section 3 shows that at least for coalescing stochastic flows on $\mathbb{R}$ existence of a stationary point is connected with the structure of the dual flow. Taking into accout theorem \ref{thm1} of section 2 it is natural to ask what is the dual flow for the Arratia flow with drift $a.$  The next theorem describes finite-point motions of such dual flow. As above, we assume that $a:\mathbb{R}\to\mathbb{R}$   is a Lipschitz function.

\begin{theorem}
	For arbitrary $u_1,\ldots,u_n\in\mathbb{R},$ $s_1,\ldots,s_n\in \mathbb{R}$ there exist two families of random processes $\{f(s_i,t,u_i,): t\geq s_i, 1\leq i\leq n\}$ and $\{g(t,s_i,u_i): t\leq s_i, 1\leq i\leq n\}$ such that 
	\begin{enumerate}
		\item for every $i$ the process 
		$$
		f(s_i,t,u_i)-\int^t_{s_i}a(f(s_i,r,u_i))dr, \ t\geq s_i
		$$
		is a Wiener martingale with respect to the filtration
		$$
		\begin{aligned}
		\mathcal{F}^+_t=\sigma(&\{f(s_j,r,u_j):s_j\leq r\leq t,1\leq j\leq n\}\\
		&\cup \{g(r_1,s_k,u_k)-g(r_2,s_k,u_k):r_2\leq r_1\leq t, 1\leq k\leq n\});
		\end{aligned}
		$$
		
		\item for every $i$ the process 
		$$
		g(t,s_i,u_i)-\int^{s_i}_ta(g(r,s_i,u_i))dr, \ t\leq s_i
		$$
		is a Wiener martingale with respect to the filtration
		$$
		\begin{aligned}
		\mathcal{F}^-_t=\sigma(&\{g(r,s_j,u_j):t\leq r\leq s_j, 1\leq j\leq n\} \\
		&\cup \{f(s_k,r_2,u_k)-f(s_k,r_1,u_k):t\leq r_1\leq r_2, 1\leq k\leq n\});
		\end{aligned}
		$$
		
		\item $f(s_j,s_j,u_j)=g(s_j,s_j,u_j)=u_j,$ $j=1,\ldots,n;$
		
		\item for arbitrary $i_1\ne i_2$  processes $f(s_{i_1},\cdot,u_{i_1})$ and $f(s_{i_2},\cdot,u_{i_2})$
	coalesce after meeting, and processes  $g(\cdot,s_{i_1},u_{i_1})$ and $g(\cdot,s_{i_2},u_{i_2})$ coalesce after meeting;

		\item the trajectories of all processes $f$ and $g$ do not cross, i.e. there are no points $s_i\leq r_1<r_2\leq s_j$ such that 
		$$
		f(s_i,r_1,u_i)>g(r_1,s_j,u_j) \mbox{ and } f(s_i,r_2,u_i)<g(r_2,s_j,u_j)
		$$
		or 
		$$
		g(s_i,r_1,u_i)>f(r_1,s_j,u_j) \mbox{ and } g(s_i,r_2,u_i)<f(r_2,s_j,u_j);
		$$
		
		\item quadratic covariation of any two processes $f$ has a derivative $0$ before meeting time and $1$ after meeting time; quadratic covariation of any two processes $f$ has a derivative $0$ before meeting time and $1$ after meeting time.
		
				\end{enumerate}
\end{theorem}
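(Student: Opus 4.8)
The plan is to reduce at once to the fixed finite configuration $u_1,\dots,u_n$, $s_1,\dots,s_n$ and to build the forward and backward families jointly on one probability space. The forward family will be the $n$-point motion of the Arratia flow with drift $a$ (so, in law, the $n$-point motion of the flow $\psi$ of Theorem \ref{thm1}), and the backward family will be constructed on the same space so that the non-crossing property 5 holds. Items 3 and 4 for the $f$'s, and the version of item 1 with $\mathcal F^+_t$ replaced by the filtration generated by the $f$'s alone, are then immediate from Definition \ref{def23_03} and the characterization of the coalescing semigroups $P^{(n),c}$; what is genuinely new is (a) the construction of the backward family with items 3--4 for the $g$'s and the covariation item 6, and (b) the martingale characterizations 1, 2 with respect to the \emph{enlarged} filtrations $\mathcal F^+_t,\mathcal F^-_t$ that also record the increments of the dual web. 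I would obtain the joint object, and in particular (b), by transporting the corresponding facts for the driftless coalescing--reflecting system of \cite{SoucaliucTothWerner} via Girsanov's theorem on compact time windows, supplemented by a direct Skorokhod-reflection description of the backward web.

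For the construction of $g$: fix a large $T$ and work on $[-T,T]$, where the forward web is, after coalescence, a finite collection of continuous trajectories. Define each $g(\cdot,s_j,u_j)$, run backward from $s_j$, as the weak solution of $dg=a(g)\,dt+d\beta_j$ reflected so as not to cross any forward trajectory, with coalescence among the $g$'s imposed by stopping and restarting upon meeting. Because $a$ is Lipschitz, the Skorokhod map associated with the finitely many piecewise-defined forward barriers is Lipschitz, so this reflected equation is well posed and is solved by Picard iteration exactly as for coalescing--reflecting Brownian motions in \cite{SoucaliucTothWerner}. By construction items 3 and 4 for the $g$'s and the non-crossing property 5 hold on $[-T,T]$; since the constructions on nested windows agree, a projective-limit argument produces a single family $g$ defined for all times with properties 3--5, realized jointly with the forward web.

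For items 1, 2 and 6: restrict the joint law of the $(f,g)$-web to $[-T,T]$. It is absolutely continuous with respect to the joint law of the corresponding driftless coalescing--reflecting system, with Radon--Nikodym density the product of forward and backward Girsanov exponentials $\exp\!\big(\int a\,dM-\tfrac12\int a^2\,d\langle M\rangle\big)$ taken along the web; Novikov's condition holds because on a compact time interval the reflected and coalesced processes are dominated in modulus by genuine Brownian motions (reflection and coalescence do not increase the modulus) while $|a|^2$ has at most quadratic growth, as in Lemma \ref{lem1} and \cite[Ch.\ 4]{Mao}. An equivalent change of measure leaves the a.s.\ pathwise statements 4, 5 and 6 unchanged --- quadratic variations and covariations are invariant, so the covariation derivatives $0$ before meeting and $1$ after are inherited from the driftless web --- preserves item 3, and, by Girsanov, turns the driftless-web martingales into continuous martingales whose compensators are $\int a(f)\,dr$ and $\int a(g)\,dr$ relative to $\mathcal F^+_t$ and $\mathcal F^-_t$; this is exactly items 1 and 2. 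Combined with item 6 and L\'evy's characterization, each $f(s_i,\cdot,u_i)$ is a weak solution of \eqref{6_11_eq1} and each $g(\cdot,s_j,u_j)$ a weak solution of the time-reversed equation. Finally the Girsanov densities over nested windows form a consistent family of martingales, so the measures on $[-T,T]$ are consistent and pass to the projective limit, giving the asserted family on all of $\mathbb R$.

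The step I expect to be the main obstacle is precisely the enlarged-filtration part of items 1 and 2, i.e.\ that the forward web remains a drifted martingale once the filtration is enlarged by the increments of the backward web, and symmetrically. In the driftless situation this is the substance of \cite{SoucaliucTothWerner} (the dual increments up to time $t$ do not anticipate the forward web beyond time $t$), and the role of Girsanov is only to add the Lipschitz drift while keeping this joint non-anticipating structure intact, which is legitimate because absolute continuity is a property of laws on path space and the enlarged filtrations are generated by the same path coordinates. The remaining technical points --- well-posedness of the backward Skorokhod problem against a moving, piecewise-defined barrier, the Novikov estimate uniform enough in $T$, and the routine verification that the projective limit inherits properties 1--6 --- should be straightforward given the Lipschitz assumption and the moment bound of Lemma \ref{lem1}.
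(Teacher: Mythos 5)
Your route is genuinely different from the paper's, and it contains a gap at exactly the point you flag as the main obstacle. The paper does not use reflection SDEs or Girsanov at all: it takes the driftless forward/backward coalescing--reflecting webs of \cite{SoucaliucTothWerner} as a black box (so all joint non-crossing and dual-filtration properties are inherited from the driftless case), interleaves them with the deterministic order-preserving flow $h$ of the ODE $\dot h=a(h)$ on alternating subintervals of a partition (the fractional step method of \cite{GK}), observes that composing with the same homeomorphisms preserves non-crossing of the forward and backward approximants, and identifies the weak limit of the rescaled compositions via \cite{DV}. This sidesteps every change of measure and every reflected equation.

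The gap in your argument is the Girsanov step. In the driftless coalescing--reflecting system the backward web is not driven by noise independent of the forward web --- the dual paths are, up to null sets, measurable functionals of the forward web plus reflection --- so ``the product of forward and backward Girsanov exponentials taken along the web'' is not a legitimate Radon--Nikodym density: either it double-counts the randomness or, if written in terms of the driving noises of your reflection construction, it tilts the free drivers $\beta_j$ rather than the reflected paths $g$, and it does not follow that the reflected paths acquire drift $a$ in backward time. More fundamentally, a Girsanov density that is a martingale with respect to the forward filtration yields semimartingale decompositions with respect to \emph{that} filtration only; items 1 and 2 require the decompositions simultaneously with respect to the forward-enlarged filtration $\mathcal{F}^+_t$ (containing backward increments) and the backward-enlarged filtration $\mathcal{F}^-_t$ (containing forward increments). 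Your justification --- that absolute continuity is a property of laws on path space and the enlarged filtrations are generated by the same coordinates --- conflates equivalence of laws with the filtration-specific content of Girsanov's theorem; establishing that one density process is a martingale for both time directions of the web is precisely the nontrivial structural fact that would need proof, and it is essentially equivalent to the statement of the theorem. A secondary, fixable point: the Novikov bound via ``reflection and coalescence do not increase the modulus'' is false as stated (reflection off a moving barrier can displace a path arbitrarily far); one should instead localize and use the linear growth of $a$ together with the moment bound of Lemma \ref{lem1}.
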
		
	\begin{proof}
		We present the construction of the process $f$ and $g$ in the following case. The processes $f$ will start from $u_1,\ldots,u_n$ at time $0$ and the processes $g$ will start (in backward time) from $v_1,\ldots,v_m$ at time $1.$ The general case can be obtained easily. To construct the desired set of processes we will use fractional step method proposed by P. Kotelenez for stochastic  differential equations with smooth coefficients \cite{GK} and successfully applied in \cite{DV} to the construction of the Arratia flow with Lipshitz drift $a.$
		
Let us take a partition $t_k=\frac{k}{n},$ $0\leq k\leq 2n.$ Denote by $\psi_0$ the Arratia flow and by $\tilde{\psi}_0$ the dual flow. Also denote by $h(s,t,u)$ the solution to Cauchy problem
		$$
		\begin{cases}
		dh(s,t,u)=a(h(s,t,u))dt, \\
		h(s,s,u)=u, \ t\geq s.
		\end{cases}
		$$
		
	Construct processes $\tilde{f}_n$ as a subsequent superposition of $\psi$ and $h$ on intervals of partition, e.g. 
	$$
	\tilde{f}_n(u)=\bigg(\psi_{t_{2n-1},t_{2n}}\circ h(t_{2n-2},t_{2n-1},\cdot)\circ \ldots \circ h(0,t_1,\cdot)\bigg)(u)
	$$
	Processes $\tilde{g}_n$ are defined in  the same way in backward time. Note that trajectories of $\tilde{f}_n$ and $\tilde{g}_n$ do not cross. Now define processes $f_n$ and $g_n$ by the rule 
	$$
	f_n(t)=\tilde{f}_n(2t), \ g_n(t)=\tilde{g}_n(2t), \ t\in[0,1].
	$$
	It follows from arguments in \cite{DV} that $\{f_n,g_n\}$ weakly converge to the family of processes with desired properties.
		\end{proof}
	
	Consequently, at least in the sense of finite-point motions, dual flow has the same structure as initial one, but with the drift $-a.$ Now, let us note that under condition \eqref{7_11_eq2} two processes
	$$
	z_i(t)=u_i-\int^t_0a(z_i(s))ds+w_i(s), \ i=1,2
	$$
	with independent Wiener processes $w_1,w_2$ do not meet with positive probability. Taking into account considerations of section 3 this explains why for the Arratia flow with a strictly monotone drift $a$ we have a possibility for existence of a stationary point.

\end{document}